\date{\scriptsize   Received: , Accepted: .}
\newtheorem{theorem}{Theorem}[section]
\newtheorem{proposition}[theorem]{Proposition}
\newtheorem{lemma}[theorem]{Lemma}
\newtheorem{corollary}[theorem]{Corollary}
\theoremstyle{definition}
\newtheorem{definition}[theorem]{Definition}
\newtheorem{example}[theorem]{Example}
\theoremstyle{remark}
\newtheorem{remark}[theorem]{Remark}
\numberwithin{equation}{section}
\newtheorem*{notation}{Notation}
\def \d {\partial}
\def \sol {\dashv}
\def \sag {\vdash}
\def \C {\mathbb{C}}
\def \cat {cat$^1$-}
\def \tr {\triangleright}
\newcommand{\id}{\mathrm{id}}
\begin{document}
 
\title[Limits in MCI]{Limits in Modified Categories of Interest} 

\author[K. Emir]{Kad\.{I}r Em\.{I}r$^*$}
\address[Kadir Emir]{Department of Mathematics and Computer Science, Eski\c{s}ehir Osmangazi University, Turkey.}
\email{kadiremir86@gmail.com}
 
\author[S. \c{C}etin]{Sel\.{I}m \c{C}et\.{I}n}
\address[Selim \c{C}etin]{Department of Mathematics, Mehmet Akif Ersoy University, Burdur, Turkey.}
\email{selimcetin@mehmetakif.edu.tr}

  \thanks{$^*$Corresponding author}

 \maketitle

\begin{abstract}
We firstly prove the completeness of the category of crossed modules in a modified category of interest. Afterwards, we define pullback crossed modules and pullback \cat objects that are both obtained by pullback diagrams with extra structures on certain arrows. These constructions unify many corresponding results for the cases of groups, commutative algebras and can also be adapted to various algebraic structures. \\
\textbf{Keywords:}  Modified category of interest, crossed module, \cat object, limit.  \\
\textbf{MSC(2010):}  18D05, 17A30, 18A30, 18A35.
\end{abstract}
 
\section{\bf Introduction}

The notion of category of interest was introduced to unify various properties of algebraic structures. The main idea is due to Higgins \cite{Hig} and the definition is improved by Orzech \cite{Orz}. As indicated in \cite{ CDL4, CDL5, Loday1, Lo3, LoRo, Orz}, many algebraic categories are the essential examples of category of interest. However the categories of cat$^{1}$-objects of Lie (associative, Leibniz, etc.) algebras are not. Because of this issue, the authors of \cite{BCDU} introduced a new type of this notion, called {\it modified category of interest} that satisfies all axioms of the former notion except one, which is replaced by a new and modified axiom. The main examples are those, which are equivalent to the categories of crossed modules in the categories of groups, (commutative) algebras, dialgebras, Lie and Leibniz algebras, etc. See \cite{bau,CMDA,dede,lue,Por} for more examples.

\medskip

Crossed modules were introduced by Whitehead in \cite{W1} as a model of homotopy 2-types and used to classify higher dimensional cohomology groups. The notion of crossed module is also defined for various algebraic structures. However the definition of crossed modules in modified
categories of interest unifies all of these definitions.  As an equivalent model of 
homotopy 2-types, cat$^{1}$-groups are introduced by Loday in \cite{Lswf}. This notion and the corresponding equivalence is also adapted to many algebraic structures, as well as to modified category of interest \cite{ESA}.

\medskip

In this paper, we firstly prove that the category of crossed modules in a modified category of interest $\C$ is finitely complete. This unifies a number of constructions given in \cite{Nizar}. Afterwards, we define pullback crossed modules and pullback \cat objects in $\C$ that are both obtained by pullback diagrams with extra categorical structures on certain arrows. These definitions will unify the constructions and results given in \cite{Alp1,Alp2,Brown}. Moreover, one can adapt them to many different algebraic structures such as Lie algebras, Leibniz algebras, dialgebras, etc.

\section{\bf Preliminaries}

In this section, we recall some notions from \cite{BCDU, Lswf, ESA}.

\subsection{Modified Category of Interest}

\begin{definition}
	Let $\mathbb{C}$ be a category of groups with a set of operations $\Omega$ and
	with a set of identities $\mathbb{E}$, such that $\mathbb{E}$ includes the
	group identities and the following conditions hold. If $\Omega_{i}$ is the set
	of $i$-ary operations in $\Omega$, then:
	
	\begin{enumerate}
		\item[(a)] $\Omega=\Omega_{0}\cup\Omega_{1}\cup\Omega_{2}$;
		
		\item[(b)] the group operations (written additively : $0,-,+$) are elements of
		$\Omega_{0}$, $\Omega_{1}$ and $\Omega_{2}$ respectively. Let $\Omega
		_{2}^{\prime}=\Omega_{2}\setminus\{+\}$, $\Omega_{1}^{\prime}=\Omega
		_{1}\setminus\{-\}.$ Assume that if $\ast\in\Omega_{2}$, then $\Omega
		_{2}^{\prime}$ contains $\ast^{\circ}$ defined by $x\ast^{\circ}y=y\ast x$ and
		assume $\Omega_{0}=\{0\}$;
		
		\item[(c)] for each $\ast\in\Omega_{2}^{\prime}$, $\mathbb{E}$ includes the
		identity $x\ast(y+z)=x\ast y+x\ast z$;
		
		\item[(d)] for each $\omega\in\Omega_{1}^{\prime}$ and $\ast\in\Omega
		_{2}^{\prime}$, $\mathbb{E}$ includes the identities $\omega(x+y)=\omega
		(x)+\omega(y)$ and \emph{either} the identity $\omega(x\ast y)=\omega(x)\ast \omega(y)$ \emph{or} the identity $\omega(x\ast y)=\omega(x)\ast y$.
	\end{enumerate}
	
	Denote by $\Omega'_{1S}$ the subset of those elements in $\Omega'_1$, which satisfy the identity $\omega(x \ast y) = \omega(x) \ast y $, and by $\Omega''_1$ all other unary operations, i.e. those which satisfy the first identity from (d).
	
	\medskip

	Let $C$ be an object of $\mathbb{C}$ and $x_{1},x_{2},x_{3}\in C$:
	
	\begin{enumerate}
		\item[(e)] $x_{1}+(x_{2}\ast x_{3})=(x_{2}\ast x_{3})+x_{1}$,
	for each $\ast\in\Omega_{2}^{\prime}$.\newline
	
	\item[(f)] For each ordered pair $(\ast,\overline{\ast}%
	)\in\Omega_{2}^{\prime}\times\Omega_{2}^{\prime}$ there is a word $W$ such
	that:
	\begin{eqnarray*}
	(x_{1}\ast x_{2}) \ \overline{\ast} \ x_{3}=W \big( x_{1}(x_{2}x_{3}),x_{1}(x_{3}
	x_{2}),(x_{2}x_{3})x_{1},
	(x_{3}x_{2})x_{1}, \\ x_{2}(x_{1}x_{3}),x_{2}(x_{3}x_{1}),(x_{1}x_{3})x_{2}
	,(x_{3}x_{1})x_{2} \big),
	\end{eqnarray*}
  \end{enumerate}
  where each juxtaposition represents an operation in $\Omega_{2}^{\prime}$.
  
  \medskip

	A category of groups with operations $\mathbb{C}$ satisfying
	conditions (a)-(f) is called a \textbf{modified
		category of interest}, or \textbf{MCI} for short.
\end{definition}

As indicated in \cite{BCDU}, the difference between this definition and that
of the original \textbf{category of interest} is the modification of the second identity in (d). According to this definition every category of interest is also a modified category of interest.

\begin{definition}
	Let $A,B$ be two objects of $\C$. A map $f \colon A \to B$ is called a morphism of $\C$ if it satisfies:
	\begin{align*}
	f(a + a') & = f (a) + f(a') , \\
	f(a \ast a') & = f(a) \ast f(a') ,
	\end{align*}
	for all $a,a' \in A$, $\ast\in\Omega_{2}^{\prime}$ and also commutes with all $w \in \Omega_{1}^{\prime}$.
\end{definition}

\begin{example}\label{example1}
	The categories of groups, algebras, commutative algebras, Lie algebras, Leibniz algebras, dialgebras are all (modified) categories of interest.
\end{example}

\begin{example}
	The categories \ $\mathbf{Cat}^{\mathbf{1}}\mathbf{Ass}$,  $\mathbf{Cat}
	^{\mathbf{1}}\mathbf{Lie}$, $\mathbf{Cat}^{\mathbf{1}}\mathbf{Leibniz}$, i.e.
	the categories of \cat associative algebras, \cat Lie algebras and \cat Leibniz algebras are the examples of modified categories of interest, which are not
	categories of interest (see \cite{BCDU} for details).
\end{example}

\begin{notation}
	From now on, $\mathbb{C}$ will denote an arbitrary but fixed modified category of interest.
\end{notation}

\begin{definition}
Let $B\in\mathbb{C}$. A subobject of $B$ is called an ideal if it is the
kernel of some morphism. 

\medskip

In other words, $A$ is an
ideal of $B$ if and only if  $A$ is a normal subgroup of $B$ and $a\ast b\in A,$ for all $a\in A$, $b\in B$ and $\ast \in\Omega_{2}^{\prime}$.
\end{definition}

\begin{definition}
	Let $A,B\in\mathbb{C}$. An extension of $B$ by $A$ is a sequence:
	\begin{align}\label{split}
	\xymatrix{0\ar[r]&A\ar[r]^-{i}&E\ar[r]^-{p}&B\ar[r]&0}
	\end{align}
	where $p$ is surjective and $i$ is the kernel of $p$. We say that an extension is split if there exists a morphism $s \colon B \to E$ such that $p s = 1_B$.
\end{definition}

\begin{definition}
	The split extension \eqref{split} induces an action of $B$ on $A$ corresponding to the operations of $\C$ with:
	\begin{align*}
	b\cdot a&=s(b)+a-s(b), \\
	b\ast a&=s(b)\ast a,
	\end{align*}
	for all $b\in B$, $a\in A$ and $\ast\in\Omega'_{2}.$ 
	
	\medskip
	
	Actions defined
	by the previous equations are called \textit{derived actions} of $B$ on $A$. Remark that we use the notation $`` \ast \text{''}$ to denote both the star operation and the star action.
	
	\medskip
	
	Given an action of $B$ on $A,$ a semi-direct product $A\rtimes B$ is a
	universal algebra, whose underlying set is $A\times B$ and the operations are
	defined by:
	\begin{align*}
	\omega(a,b) & = (\omega\left(  a\right)  ,\omega\left(  b\right)  ),\\
	(a^{\prime},b^{\prime})+(a,b) & = (a^{\prime}+b^{\prime}\cdot a,b^{\prime
	}+b),\\
	(a^{\prime},b^{\prime})\ast(a,b) & = (a^{\prime}\ast a+a^{\prime}\ast
	b+b^{\prime}\ast a,b^{\prime}\ast b),
	\end{align*}
	for all $a,a^{\prime}\in A,$ $b,b^{\prime}\in B$, $\ast\in\Omega_{2}^{\prime}$. An action of $B$ on $A$ is a derived action if and only if
	$A\rtimes B$ is an object of $\mathbb{C}$.
	
	\medskip
	
	Denote a general category of groups with operations of a modified category of interest $\C$ by $\C_G$. 	A set of actions of $B$ on $A$ in $\mathbb{C}_{G}$ is a set of
	derived actions if and only if it satisfies the following conditions:
	
	\begin{enumerate}

		\item[\textit{1.}] $0\cdot a=a$,
		
		\item[\textit{2.}] $b\cdot(a_{1}+a_{2})=b\cdot a_{1}+b\cdot a_{2}$,
		
		\item[\textit{3.}] $(b_{1}+b_{2})\cdot a=b_{1}\cdot(b_{2}\cdot a)$,
		
		\item[\textit{4.}] $b\ast(a_{1}+a_{2})=b\ast a_{1}+b\ast a_{2}$,
		
		\item[\textit{5.}] $(b_{1}+b_{2})\ast a=b_{1}\ast a+b_{2}\ast a$,
		
		\item[\textit{6.}] $(b_{1}\ast b_{2})\cdot(a_{1}\ast a_{2})=a_{1}\ast a_{2}$,
		
		\item[\textit{7.}] $(b_{1}\ast b_{2})\cdot(a\ast b)=a\ast b$,
		
		\item[\textit{8.}] $a_{1}\ast(b\cdot a_{2})=a_{1}\ast a_{2}$,
		
		\item[\textit{9.}] $b\ast(b_{1}\cdot a)=b\ast a$,
		
		\item[\textit{10.}] $\omega(b\cdot a)=\omega(b)\cdot\omega(a)$,
		
		\item[\textit{11.}] $\omega(a\ast b)=\omega(a)\ast b = a \ast \omega(b)$ for any $\omega \in \Omega'_{1S}$, and $\omega(a \ast b)=\omega(a) \ast \omega (b)$ for any $\omega \in \Omega''_{1}$,
		
		\item[\textit{12.}] $x\ast y+z\ast t=z\ast t+x\ast y$,
	\end{enumerate}
	for each $\omega\in\Omega_{1}^{\prime}$, $\ast\in\Omega_{2}^{\prime}$, $b$,
	$b_{1}$, $b_{2}\in B$, $a,a_{1},a_{2}\in A$; and for  $x,y,z,t\in A\cup B$
	whenever both sides of the last condition are defined.
\end{definition}

\subsection{Crossed Modules}

\begin{definition}
	A crossed module $(C_{1},C_{0},\partial)$ in $\mathbb{C}$ is given by a morphism $\partial \colon C_{1} \to C_{0}$ with a derived action of $C_{0}$ on $C_{1}$ such that:
	\medskip
	\begin{itemize}
		\item[XM1)]
		$\begin{array}{l}
		\partial(c_{0}\cdot c_{1})=c_{0}+\partial(c_{1})-c_{0} \\
		\partial(c_{0}\ast c_{1})=c_{0}\ast\partial(c_{1})
		\end{array}$
		\bigskip
		\item[XM2)]
		$\begin{array}{l}
		\partial(c_{1})\cdot c_{1}^{\prime}=c_{1} +c_{1}^{\prime}-c_{1} \\
		\partial(c_{1})\ast c_{1}^{\prime}=c_{1}\ast c_{1}^{\prime}
		\end{array}$
	\end{itemize}
	\medskip
	for all $c_{0}\in C_{0}$, $c_{1},c_{1}^{\prime}\in C_{1}$, $\ast\in\Omega_{2}^{\prime}$.
	
	\medskip
	
	A morphism between two crossed modules $(C_{1},C_{0},\partial
	)\to(C_{1}^{\prime},C_{0}^{\prime},\partial^{\prime})$ is a pair $(\mu_{1},\mu_{0})$ of morphisms
	$\mu_{0} \colon C_{0}\to C_{0}^{\prime}$, $\mu_{1} \colon C_{1}\to C_{1}^{\prime}$, such that the diagram:
	$$ \xymatrix @R=40pt@C=40pt{
		C_1 \ar[r]^{\d} \ar[d]_{\mu_1} &  C_0  \ar[d]^{\mu_0}  \\
		C'_1 \ar[r]_{\d'} & C'_0   } $$
	commutes and:
	\begin{align*}
	\mu_{1}(c_{0}\cdot c_{1})&=\mu_{0}(c_{0})\cdot\mu_{1}(c_{1}) \, ,\\
	\mu_{1}(c_{0}\ast c_{1})&=\mu_{0}(c_{0})\ast\mu_{1}%
	(c_{1}) \, ,
	\end{align*}
	for all $c_{0}\in C_{0}$, $c_{1}\in C_{1}$ and $\ast\in\Omega_{2}^{\prime}$.
\end{definition}

\medskip

Crossed modules and their morphisms form the category of crossed modules in $\C$ that will be denoted by $\mathbf{XMod}$.

\begin{example}\cite{JFM2}
	A crossed module of groups is given by a group homomorphism $\d\colon E \to G$, together with an action $\tr$ of $G$ on $E$ such that (for all $e,f \in E$ and $g \in G$):
	\begin{itemize}
		\item $\d(g \tr e) =g \,\d(e) \, g^{-1}$,
		\item $\d(e)  \tr f=e\, f \, e^{-1}$.
	\end{itemize}
\end{example}

\begin{example}\cite{JFM2}
	A crossed module of Lie algebras is given by a Lie algebra homomorphism $\d\colon \mathfrak{e} \to \mathfrak{g}$, together with an action $\tr$ of $\mathfrak{g}$ on $\mathfrak{e}$ such that (for all $e,f \in \mathfrak{e}$ and $g \in \mathfrak{g}$):
	\begin{itemize}
		\item $\d(g \tr e) = [g ,\d(e)]$,
		\item $\d(e)  \tr f= [e, f]$.
	\end{itemize}
\end{example}

Note that $\tr$ denotes the group action and the Lie algebra action respectively in the previous examples.

\subsection{{Cat}$^{1}$ Objects}

\begin{definition}
	Let $S$ be a subobject of $R$. A cat$^{1}$-object $(e;s,t,R \to S)$ in $\mathbb{C}$ is an object $C$ together with the morphisms $
	s,t \colon R \to S$ and $e \colon S \to R$ such that satisfying the following conditions:	
	\begin{itemize}
		\item $ s  e = \id_S$ and $t e = \id_S$,
		\item $x\ast y=0$, $x+y-x-y=0$,
	\end{itemize}
	for all $\ast\in\Omega_{2}^{\prime}$
	and $x \in \ker s$, $y \in \ker t$. 
	
	\bigskip
	
	Let $C=\left( e;s,t:R\rightarrow S\right) $ and $C^{\prime }=\left(
	e^{\prime };s^{\prime },t^{\prime }:R^{\prime }\rightarrow S^{\prime
	}\right) $ be two \cat objects. A \cat morphism $\left( \phi ,\varphi \right) \colon C \to C'$  is a tuple which consists of morphisms $\phi :R\rightarrow R^{\prime }$ and $\varphi
	:S\rightarrow S^{\prime }$ such that the following diagram commutes:	
	$$
	\diagram
	R\ddto\ddto<.5ex>^{s}_{t} %
	\rrto^{\phi} &&  R'\ddto \ddto<.5ex>^{s'}_{t'}\\
	\\
	S \rrto_{\varphi} \uutol^{{e}} && S' \uutor_{e'}
	\enddiagram
	$$
\end{definition}

Cat$^1$-objects and their morphisms form the category of \cat objects in $\C$ that will be denoted by $\mathbf{Cat^1}$.

\begin{notation}
	We denote any \cat object in $\C$ by $(R,S)$ for short.
\end{notation}

\begin{example}\cite{jas}
	A cat$^{1}$-Leibniz
	algebra consists of a Leibniz algebra $L$, a sub Leibniz algebra $M$ and Leibniz
	algebra homomorphisms: $ s, t \colon L \to M$ and $e \colon M \to L$ such that:
	\begin{itemize}
		\item $ s  e = \id_M$ and $t e = \id_M$,
		
		\item $[x, y]=0=[y, x]$,
	\end{itemize}
	for all $x \in \ker s$, $y \in \ker t$.
\end{example}

\begin{example}
	A cat$^1$-dialgebra consists of a dialgebra \cite{Loday1} $D$, a sub dialgebra and dialgebra homomorphisms: $ s,t \colon D \to F$ and $e \colon F \to D$  such that:
	\begin{itemize}
		\item $ se=\id_F$ and $te=\id_F$,
		
		\item  $x\sol  y=0=y \sol x$,
		$x \sag y=0=y \sag x $,
	\end{itemize}
	for all $x \in \ker s$, $y \in \ker t$.
\end{example}

\begin{proposition}
	The categories $\mathbf{XMod}$ and $\mathbf{Cat}^\mathbf{1} $ are equivalent.
\end{proposition}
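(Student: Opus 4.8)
The plan is to set up a pair of functors $\Phi\colon\mathbf{Cat^1}\to\mathbf{XMod}$ and $\Psi\colon\mathbf{XMod}\to\mathbf{Cat^1}$ and then to produce natural isomorphisms $\Phi\Psi\cong\id$ and $\Psi\Phi\cong\id$. On objects, $\Phi$ would send a \cat object $(e;s,t\colon R\to S)$ to the triple $(\ker s,\,S,\,\partial)$ with $\partial:=t|_{\ker s}$, where $S$ acts on the ideal $\ker s$ by the derived action that the split extension $0\to\ker s\to R\to S\to 0$ (split by $e$) induces, namely $s_0\cdot c_1=e(s_0)+c_1-e(s_0)$ and $s_0\ast c_1=e(s_0)\ast c_1$. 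Axiom XM1 is then immediate, since $t$ is a morphism and $te=\id_S$. For the Peiffer axiom XM2 the key point is that any $c_1\in\ker s$ splits as $c_1=e t(c_1)+\bigl(c_1-e t(c_1)\bigr)$ with $c_1-e t(c_1)\in\ker t$ (because $te=\id_S$), so the defining relations $x\ast y=0$ and $x+y-x-y=0$ for $x\in\ker s$, $y\in\ker t$ collapse $\partial(c_1)\cdot c_1'=e t(c_1)+c_1'-e t(c_1)$ to $c_1+c_1'-c_1$ and $\partial(c_1)\ast c_1'=e t(c_1)\ast c_1'$ to $c_1\ast c_1'$. On arrows, a \cat morphism $(\phi,\varphi)$ is sent to $(\phi|_{\ker s},\varphi)$, which is a crossed module morphism because $\phi$ commutes with the two source maps and with $t$.

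In the other direction, $\Psi$ would send a crossed module $(C_1,C_0,\partial)$ to the \cat object built on the semidirect product $R:=C_1\rtimes C_0$ — which lies in $\C$ exactly because the crossed module action is a derived action — taking $S$ to be the subobject $\{0\}\times C_0$ of $R$, $e$ its inclusion, $s(c_1,c_0)=c_0$ and $t(c_1,c_0)=\partial(c_1)+c_0$. Here $s$ is visibly a morphism, $t$ is one by virtue of the crossed module axioms, and $se=te=\id_S$ is clear. One then computes $\ker s=C_1\times\{0\}$ and $\ker t=\{(c_1,-\partial(c_1)):c_1\in C_1\}$, whence the two relations required of a \cat object reduce to XM2 (for the $\ast$-relation one also uses that $\ast^{\circ}\in\Omega_2'$ and hence satisfies XM2 as well). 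A crossed module morphism $(\mu_1,\mu_0)$ is sent to $(\mu_1\rtimes\mu_0,\mu_0)$.

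Finally I would compare the composites. The composite $\Phi\Psi$ sends $(C_1,C_0,\partial)$ to $(\ker s,C_0,t|_{\ker s})$ with $\ker s=C_1\times\{0\}\cong C_1$ and $t|_{\ker s}(c_1,0)=\partial(c_1)$, so it is naturally isomorphic to the identity. For $\Psi\Phi$ applied to $(e;s,t\colon R\to S)$, the natural comparison $\ker s\rtimes S\to R$ sending $(c_1,s_0)$ to $c_1+e(s_0)$ is the candidate isomorphism: its additive part is the classical semidirect decomposition of $R$ afforded by the splitting $e$ of $s$, its compatibility with the operations $\ast\in\Omega_2'$ follows from the semidirect product formulas, and it intertwines $s$, $t$ and $e$ on the two sides, so the construction yields an equivalence (rather than a literal isomorphism) of categories. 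The main obstacle here is bookkeeping rather than conceptual: one has to verify that the semidirect products occurring are genuine objects of $\C$ by running through the derived-action conditions \textit{1.}--\textit{12.}, and that the comparison maps respect every $\ast\in\Omega_2'$ and every $\omega\in\Omega_1'$, keeping track of the distinction between $\Omega'_{1S}$ and $\Omega''_1$. Beyond XM1, XM2 and the recalled characterisation of derived actions, no new ingredient is required.
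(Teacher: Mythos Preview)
Your proposal is correct and follows essentially the same approach as the paper: the functor $\Psi$ you describe coincides with the paper's $\mathbf{C^1}$ (semidirect product with $s$, $t$, $e$ defined exactly as here), and for the inverse direction the paper simply refers to \cite{ESA}, whose construction is precisely your $\Phi$ via $\ker s$ with the derived action induced by the splitting $e$. Your write-up is in fact considerably more detailed than the paper's own sketch, supplying the verification of XM2 via the decomposition $c_1=e t(c_1)+(c_1-e t(c_1))$ and the natural isomorphisms $\Phi\Psi\cong\id$, $\Psi\Phi\cong\id$ that the paper omits entirely.
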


\begin{proof}
	Let $(C_{1},C_{0},\partial )$ be a crossed module in $\mathbb{C}$. Consider
	the corresponding semi-direct product $C_{1}\rtimes C_{0}$ induced from the
	action of $C_{0}$ on $C_{1}$. By using the morphisms $s,t \colon C_{1}\rtimes
	C_{0}\to C_{0}$ and $e \colon C_0 \to C_{1}\rtimes
	C_{0}$ defined by $s(c_{1},c_{0})=c_{0}$, $t(c_{1},c_{0})=\d (c_{1})+c_{0}$ and $e(c_0)=(0,c_0)$, we obtain a \cat object. This yields to the functor $\mathbf{C^1} \colon \mathbf{XMod} \to \mathbf{Cat}^{\mathbf{1}}$. See \cite{ESA} for converse.
\end{proof}

\section{\bf Limits in MCI}

The cartesian product $P \times R$ is the product object of $P$ and $R$ in $\C$, with the projection morphisms satisfying the universal property. 

\medskip

Suppose that $\alpha :P\rightarrow S$ and $\beta :R\rightarrow S$ are two morphisms in $\C$. Then the subobject of the cartesian product: 
\begin{align*}
P\times _{S}R=\left\{ \left( p,r\right) \mid \alpha \left( p\right) =\beta
\left( r\right) \right\},
\end{align*} 
the {\it fiber product}, defines the pullback of $\alpha, \beta$. 

\medskip

Therefore a modified category of interest $\C$ has products and pullbacks which guarantees the existence of equalizer objects. Briefly, suppose that we have two parallel morphisms $f,g \colon P \to R $. Their equalizer is defined as $\mathrm{Eq}(f,g)=\{ x \in P \mid f(x)=g(x) \}$.

\medskip

Consequently, we can say that $\C$ has all finite limits since it has both products and equalizers. Thus $\C$ is finitely complete.

\subsection{Limits in Category of Crossed Modules in MCI}

\begin{definition}
	The category of crossed modules in $\C$ with fixed codomain $X$ forms a full subcategory of $\mathbf{XMod}$ that is denoted by $\mathbf{XMod/X}$. These kind of crossed modules will be called {\it crossed $X$-modules}. 
\end{definition}

\begin{lemma}\label{main1}
	Given two crossed modules $(P,S, \alpha)$ and $(R,S,\beta)$ there is a crossed module: 
	\begin{align*}
	\partial :P\times _{S}R\rightarrow S \, ,
	\end{align*}
	where $\partial \left(
	p,r\right) =\alpha \left( p\right) =\beta \left( r\right) $ and the action of $S$ on $P\times _{S}R$ is defined by:
	\begin{equation*}
    s \cdot \left( p,r\right) 
	=\left( s \cdot p, s \cdot r \right) ,
	\quad \quad \quad
    s \ast \left( p,r\right) 
	=\left(s \ast p,s\ast r\right) .
	\end{equation*}
\end{lemma}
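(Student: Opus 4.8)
The plan is to verify directly that the stated data $(P\times_S R,\,S,\,\partial)$ form a crossed module in $\C$, by checking in order: (i) that the proposed action of $S$ on $P\times_S R$ is well-defined, i.e.\ lands inside the fiber product; (ii) that it is a derived action, i.e.\ satisfies conditions 1--12 in the definition of derived action; (iii) that $\partial$ is a morphism of $\C$; and (iv) that the crossed module axioms XM1) and XM2) hold. Throughout I would use that $\alpha$ and $\beta$ are already crossed modules with derived actions of $S$ on $P$ and on $R$ respectively, so every identity I need for the components is available componentwise.

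First I would check well-definedness of the action. For $(p,r)\in P\times_S R$ we have $\alpha(p)=\beta(r)$, and I must show $\alpha(s\cdot p)=\beta(s\cdot r)$ and $\alpha(s\ast p)=\beta(s\ast r)$ for all $s\in S$. This is immediate from XM1) for $\alpha$ and for $\beta$: $\alpha(s\cdot p)=s+\alpha(p)-s=s+\beta(r)-s=\beta(s\cdot r)$, and similarly $\alpha(s\ast p)=s\ast\alpha(p)=s\ast\beta(r)=\beta(s\ast r)$. Hence the action restricts to $P\times_S R$, and $\partial(p,r):=\alpha(p)=\beta(r)$ is a well-defined set map; that it is a morphism of $\C$ (additive, compatible with each $\ast\in\Omega_2'$ and each $\omega\in\Omega_1'$) follows because the inclusion $P\times_S R\hookrightarrow P\times R$ and the projections are morphisms and $\partial$ agrees with $\alpha\circ\mathrm{pr}_P$.

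Next I would verify that the action is a derived action. Since $P\times_S R$ is a subobject of the product $P\times R$ closed under the $S$-action, and since the semidirect product $(P\times R)\rtimes S$ is an object of $\C$ (the action of $S$ on $P\times R$ being the product of two derived actions, hence derived), the subobject $(P\times_S R)\rtimes S$ is again an object of $\C$; by the characterization quoted in the Preliminaries this means the restricted action is derived. Alternatively one checks conditions 1--12 coordinatewise, each reducing to the corresponding condition for the derived actions of $S$ on $P$ and on $R$. Finally, XM1) for $\partial$ was already established above, and XM2) follows coordinatewise from XM2) for $\alpha$ and $\beta$ together with the fact that the group and $\ast$ operations on $P\times_S R$ are computed componentwise: for $(p,r),(p',r')\in P\times_S R$,
\begin{align*}
\partial(p,r)\cdot(p',r') &= \bigl(\alpha(p)\cdot p',\ \beta(r)\cdot r'\bigr)=\bigl(p+p'-p,\ r+r'-r\bigr)=(p,r)+(p',r')-(p,r),\\
\partial(p,r)\ast(p',r') &= \bigl(\alpha(p)\ast p',\ \beta(r)\ast r'\bigr)=\bigl(p\ast p',\ r\ast r'\bigr)=(p,r)\ast(p',r').
\end{align*}

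The main obstacle is the bookkeeping in step (ii): even though each derived-action identity holds componentwise, one must be careful that the operations on $P\times_S R$ really are the componentwise ones inherited from $P\times R$ (in particular that the semidirect-product operations are respected), and that conditions involving $\Omega_1'$ split correctly according to whether $\omega\in\Omega_{1S}'$ or $\omega\in\Omega_1''$. I expect no genuine difficulty here, only routine verification; the conceptual content is entirely in the well-definedness computation using XM1), which is where the fiber-product condition $\alpha(p)=\beta(r)$ gets used.
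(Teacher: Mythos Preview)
Your proposal is correct and follows essentially the same approach as the paper: a direct componentwise verification that $\partial$ is a $\C$-morphism and that XM1) and XM2) hold, using XM1) and XM2) for $\alpha$ and $\beta$ together with the defining relation $\alpha(p)=\beta(r)$. If anything, you are slightly more thorough than the paper, which simply asserts that the action is well-defined and that the derived-action conditions hold, whereas you spell out why the action lands in the fiber product and why it is derived.
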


\begin{proof}
	 The action given above is well-defined and the action conditions are already satisfied. Moreover $\partial :P\times _{S}R\rightarrow S$ is a morphism of $\C$ since:
	\begin{align*}
	\partial \left( \left( p,r\right) + \left( p^{\prime
	},r^{\prime }\right) \right) & =  \partial \left( p +
	p^{\prime },r + r^{\prime }\right)   \\
	& =  \alpha \left( p + p^{\prime }\right)   \\
	& =  \alpha \left( p\right) + \alpha \left( p^{\prime
	}\right)   \\
	& =  \partial \left( p,r\right) + \partial \left( p^{\prime
	},r^{\prime }\right) . 
	\end{align*}
	
	Similarly we have: 
	\begin{align*}
	\partial \left( \left( p,r\right) \ast \left( p^{\prime
	},r^{\prime }\right) \right) = \partial \left( p,r\right) \ast \partial \left( p^{\prime},r^{\prime }\right) ,
	\end{align*}
	for all $\left(
	p,r\right) , (p',r') \in P\times _{S}R$. Also $\d$ commutes with all $w \in \Omega_{1}^{\prime}$ since:
	\begin{align*}
	\d \big( w(p,r) \big) & = \d \big( w(p) , w(r) \big) = \alpha \big( w(p) \big) = w \big( \alpha (p) \big) = w \big( \d (p,r) \big).
	\end{align*}
	
	Finally, $\partial$ satisfies the crossed module conditions:
	\medskip
	\begin{itemize}
		\item[XM1)]
		\begin{align*}
		\partial \left( s \cdot \left( p,r\right)  \right) & = \partial \left(s \cdot
		p,s \cdot r\right)  = \alpha ( s \cdot p)  =  s + \alpha \left( p\right) - s = s + \partial \left( p,r\right) - s , \\
		\partial \left(s \ast \left( p,r\right) \right) & = \partial \left(
		s \ast p, s \ast r \right) 
		=  \alpha \left(s \ast p\right) 
		=   s \ast \alpha \left( p\right) 
		=  s \ast \partial \left( p,r\right) ,
		\end{align*}
		
		\item[XM2)] 
		\begin{align*}
		\partial \left( p^{\prime },r^{\prime }\right) \cdot \left( p,r\right)   & = 
		\alpha \left( p^{\prime }\right) \cdot \left( p,r\right)     \\
		& =  \left(\alpha \left( p^{\prime }\right) \cdot p , \alpha \left( p^{\prime }\right) \cdot r \right)   \\
		& =  \left(\alpha \left( p^{\prime }\right) \cdot p , \beta \left( r^{\prime }\right) \cdot r \right)   \\
		& =  \left( p' +  p - p', r' + r - r' \right)   \\
		& =  \left( p^{\prime },r^{\prime
		}\right) + \left( p,r\right) - (p',r') , 
		\end{align*}
		\begin{align*}
		\partial \left( p^{\prime },r^{\prime }\right) \ast \left( p,r\right)   & = 
		\alpha \left( p^{\prime }\right) \ast \left( p,r\right)     \\
		& =  \left(\alpha \left( p^{\prime }\right) \ast p , \alpha \left( p^{\prime }\right) \ast r \right)   \\
		& =  \left(\alpha \left( p^{\prime }\right) \ast p , \beta \left( r^{\prime }\right) \ast r \right)   \\
		& =  \left( p' \ast  p,r' \ast r \right)   \\
		& =  \left( p^{\prime },r^{\prime
		}\right) \ast \left( p,r\right) , 
		\end{align*}
	\end{itemize}
	for all $\left(p,r\right) , (p',r') \in P\times _{S}R$ and $s\in S$.
\end{proof}

\begin{lemma} \label{inducedxmod}
	Let $(\alpha,\id) \colon (P,X,\gamma) \to (S,X,\partial')$ be a crossed module morphism. Then there exists a crossed module $(P,S,\alpha)$  where the action of $S$ on $P$ are defined along $\d'$, namely:
	\begin{align*}
	s \cdot p  = \d'(s) \cdot p , \quad
	s \ast p = \d'(s) \ast p .
	\end{align*}
\end{lemma}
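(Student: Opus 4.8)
The goal is to verify that $\partial := \alpha \colon P \to S$, equipped with the action of $S$ on $P$ pulled back along $\d'$, is a crossed module. The plan is to proceed in three stages: first check that the prescribed formulas really define a derived action of $S$ on $P$; then verify the two crossed module axioms XM1 and XM2 for $\alpha$; and finally note that no compatibility with the morphism structure needs checking since we are only asserting existence of an object, not a morphism.

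For the first stage, I would observe that the derived action conditions \textit{1.}--\textit{12.} from the definition all have the shape ``a multilinear/group-theoretic identity in the action symbols equals something''. Since each occurrence of $s \cdot p$ or $s \ast p$ on the left of such an identity is by definition $\d'(s) \cdot p$ or $\d'(s) \ast p$, and $\d'$ is a morphism of $\C$ (hence additive, multiplicative on $\ast$, and commuting with unary operations), every condition for the $S$-action on $P$ reduces directly to the corresponding condition for the original $X$-action on $P$, which holds because $(P,X,\gamma)$ is a crossed module. For instance condition \textit{3.} becomes $(s_1 + s_2)\cdot p = \d'(s_1+s_2)\cdot p = (\d'(s_1)+\d'(s_2))\cdot p = \d'(s_1)\cdot(\d'(s_2)\cdot p) = s_1 \cdot (s_2 \cdot p)$, and condition \textit{11.} uses that $\d'$ commutes with the relevant $\omega$. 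Equivalently and more conceptually, one can say the action of $S$ on $P$ is obtained by restricting the $X$-action along the morphism $\d'\colon S \to X$, and restriction of a derived action along a morphism is again a derived action.

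For the second stage, XM1 asks that $\alpha(s\cdot p) = s + \alpha(p) - s$ and $\alpha(s \ast p) = s \ast \alpha(p)$. Using the definition of the new action and then the fact that $(\alpha,\id)$ is a crossed module morphism $(P,X,\gamma)\to(S,X,\partial')$ — in particular $\d' \alpha = \gamma$ and $\alpha(x \cdot p) = x \cdot \alpha(p)$, $\alpha(x \ast p) = x \ast \alpha(p)$ for the $X$-actions, together with XM1 for $(S,X,\d')$ which gives $\d'(s)\cdot s' = s + s' - s$ and $\d'(s)\ast s' = s\ast s'$ — one computes $\alpha(s \cdot p) = \alpha(\d'(s)\cdot p) = \d'(s)\cdot \alpha(p) = s + \alpha(p) - s$, and similarly for $\ast$. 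For XM2 one needs $\alpha(p')\cdot p = p' + p - p'$ and $\alpha(p')\ast p = p'\ast p$; here $\alpha(p')\cdot p$ means $\d'(\alpha(p'))\cdot p = \gamma(p')\cdot p$ (using $\d'\alpha = \gamma$), which equals $p' + p - p'$ precisely by XM2 for the original crossed module $(P,X,\gamma)$, and likewise for $\ast$.

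The main obstacle — really the only subtle point — is bookkeeping the difference between the two actions of $S$ on $P$ and of $X$ on $P$ and keeping straight which crossed module structure ($\gamma$ over $X$, $\d'$ over $X$, or the new $\alpha$ over $S$) supplies each identity; the single genuinely load-bearing fact is the equation $\d' \alpha = \gamma$ coming from commutativity of the morphism square, since it is what converts the $S$-action $\alpha(p')\cdot p = \d'\alpha(p')\cdot p$ appearing in XM2 into the $X$-action $\gamma(p')\cdot p$ to which the hypothesis XM2 for $(P,X,\gamma)$ applies. Everything else is a mechanical translation along the morphism $\d'$, so I would present the derived-action verification in a single sentence and write out only XM1 and XM2 in detail.
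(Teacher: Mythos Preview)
Your proposal is correct and follows essentially the same route as the paper: both arguments pivot on the commutativity $\partial'\alpha = \gamma$ and the action-preservation of $(\alpha,\id)$ to reduce XM1 to XM2 of $(S,X,\partial')$ and XM2 to XM2 of $(P,X,\gamma)$, with the computations line-for-line identical. The only difference is that you explicitly verify (or at least sketch) that pulling back along $\partial'$ yields a derived action, whereas the paper passes over this point in silence; your version is therefore slightly more complete, but not methodologically different.
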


\begin{proof}
	Since $(\alpha,\id)$ is a crossed module morphism, the diagram:
	$$\xymatrix{
		P \ar[dr]^-{\alpha} \ar[dd]_{\gamma}&
		\\ 
		& S \ar[dl]^-{\partial'}
		\\ X  & }
	$$
	commutes; namely $\alpha (x \cdot p) = x \cdot \alpha(p)$ and $\alpha (x \ast p) = x \ast \alpha(p)$, for all $x \in X$ and $p \in P$. Thus:
	
	\medskip
	\begin{itemize}
		\item[XM1)]
		\begin{align*}
		\alpha(s \cdot p) & = \alpha \big( \d'(s) \cdot p \big) = \d'(s) \cdot \alpha(p) = s + \alpha(p) - s \, , \\
		\alpha(s \ast p) & = \alpha \big( \d'(s) \ast p \big)  = \d'(s) \ast \alpha(p)  = s \ast \alpha(p) \, , 
		\end{align*}
		
		\item[XM2)]
		\begin{align*}
		\alpha(p) \cdot p' & =  \d'(\alpha(p)) \cdot p'  = \gamma(p) \cdot p' = p + p' - p \, ,  \\
		\alpha(p) \ast p' & = \d'(\alpha(p)) \ast p' = \gamma(p) \ast p' = p \ast p' \, , 
		\end{align*}
	\end{itemize}
	for all $s \in S$ and $p,p' \in P$.
\end{proof}

\begin{remark}\label{comp}
	If $(A,B,\d)$ and $(B,C,\d')$ are crossed modules such that $C$ acts on $A$ in a compatible way with $B$ (i.e. $(\d' b \cdot a) = b \cdot a$), then $(A,C,\d' \d)$ becomes a crossed module as well, see \cite{Nizar} for details. 
\end{remark}

\begin{lemma}
	Suppose that we have crossed module morphisms:
	$$(\alpha ,\id) \colon (P,X,\gamma) \to (S,X,\d')  \, \text{  and  }  \,(\beta ,\id) \colon (R,X,\delta) \to (S,X,\d').$$ 
	Then there exists a crossed module: 
	\begin{align*}
	P\times _{S}R \to X \, ,
	\end{align*} 
	which leads to the pullback object in $\mathbf{XMod/X}$.
\end{lemma}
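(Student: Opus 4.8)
The plan is to assemble the pullback crossed module in $\mathbf{XMod/X}$ from the three earlier lemmas rather than constructing anything new. First I would invoke Lemma~\ref{inducedxmod} twice: from the morphism $(\alpha,\id)$ we obtain a crossed module $(P,S,\alpha)$ with the action of $S$ on $P$ given by $s\cdot p=\d'(s)\cdot p$ and $s\ast p=\d'(s)\ast p$, and symmetrically from $(\beta,\id)$ we obtain a crossed module $(R,S,\beta)$ with the analogous action of $S$ on $R$. Then Lemma~\ref{main1} applies to the pair $(P,S,\alpha)$ and $(R,S,\beta)$ and yields a crossed module $\partial\colon P\times_{S}R\to S$, where $\partial(p,r)=\alpha(p)=\beta(r)$ and $S$ acts diagonally: $s\cdot(p,r)=(s\cdot p,s\cdot r)$, $s\ast(p,r)=(s\ast p,s\ast r)$.

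Next I would compose with $\d'\colon S\to X$ using Remark~\ref{comp}. To do this I must check the compatibility condition: $X$ acts on $P\times_{S}R$ (via the original crossed $X$-module structures on $P$ and $R$, applied componentwise, which is well-defined on the fiber product precisely because $\alpha,\beta$ are $X$-equivariant), and this action is compatible with the $S$-action in the sense that $\d'(s)\cdot(p,r)=s\cdot(p,r)$ and $\d'(s)\ast(p,r)=s\ast(p,r)$. But this is immediate from the very definition of the $S$-actions supplied by Lemma~\ref{inducedxmod}, since $\d'(s)\cdot(p,r)=(\d'(s)\cdot p,\d'(s)\cdot r)=(s\cdot p,s\cdot r)=s\cdot(p,r)$, and likewise for $\ast$. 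Hence Remark~\ref{comp} produces the crossed module $(P\times_{S}R,\,X,\,\d'\partial)$, which is the object claimed in the statement.

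Finally I would verify that this crossed $X$-module is the pullback of $(\alpha,\id)$ and $(\beta,\id)$ in $\mathbf{XMod/X}$. The two projections $\pi_P\colon P\times_S R\to P$ and $\pi_R\colon P\times_S R\to R$, together with $\id_X$ on the codomain, are crossed module morphisms by construction (they are the restrictions of the cartesian projections, and equivariance for both $\cdot$ and $\ast$ holds componentwise), and the square commutes because $\alpha\pi_P=\beta\pi_R$ as morphisms into $S$, hence also after composing with $\d'$ into $X$. For the universal property, given a crossed $X$-module $(Q,X,\varphi)$ with morphisms $(f,\id)\colon(Q,X,\varphi)\to(P,X,\gamma)$ and $(g,\id)\colon(Q,X,\varphi)\to(R,X,\delta)$ satisfying $\alpha f=\beta g$, the underlying-object pullback in $\C$ (discussed at the start of this section) gives a unique $\C$-morphism $u\colon Q\to P\times_S R$ with $\pi_P u=f$, $\pi_R u=g$; one then checks $u$ is $X$-equivariant (again componentwise, from equivariance of $f$ and $g$) and that $\d'\partial\, u=\varphi$, so $(u,\id)$ is the required unique morphism in $\mathbf{XMod/X}$.

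I expect the only genuine point requiring care — and hence the main obstacle — to be the verification that the componentwise $X$-action really lands in the subobject $P\times_S R\subseteq P\times R$, i.e. that $\alpha(x\cdot p)=\beta(x\cdot r)$ whenever $\alpha(p)=\beta(r)$; this uses $\alpha(x\cdot p)=x\cdot\alpha(p)=x\cdot\beta(r)=\beta(x\cdot r)$ from $X$-equivariance of $\alpha$ and $\beta$, and the analogous identity for $\ast$. Everything else is a routine bookkeeping of the universal property, which reduces to the already-established fact that $\C$ is finitely complete together with the three cited lemmas.
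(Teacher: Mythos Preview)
Your proposal is correct and follows essentially the same route as the paper: invoke Lemma~\ref{inducedxmod} to obtain the crossed modules $(P,S,\alpha)$ and $(R,S,\beta)$, apply Lemma~\ref{main1} to get $\partial\colon P\times_S R\to S$, and then use Remark~\ref{comp} to compose with $\d'$. You supply more detail than the paper does---in particular, the explicit compatibility check for Remark~\ref{comp} and the verification of the universal property---but the architecture is identical.
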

	
\begin{proof}
	By using crossed module morphisms $(\alpha ,\id)$ and $(\beta ,\id)$, we get the following morphisms of $\C$: 
	\begin{align*}
	\alpha \colon P \to S \, \text{  and  }  \, \beta \colon R \to S.
	\end{align*}
	
	We already know that the pullback of these morphisms in $\C$ are defined by the fiber product $P\times _{S}R$ that makes the following diagram commutative and satisfies the universal property:
	$$\xymatrix{
		& P\times _{S}R \ar[dl]_-{\pi _{1}} \ar[dr]^-{\pi _{2}} &
		\\ 
		P \ar[dr]_-{\alpha}  & & R \ar[dl]^-{\beta} 
		\\ &S& }
	$$	
	
	By using Lemma \ref{inducedxmod}, $\alpha$ and $\beta$ turn into crossed modules, thus we get a crossed module $\d \colon P\times _{S}R \to S$ in the sense of Lemma \ref{main1}. Moreover, $\d' \colon S \to X$ is already a crossed module and $X$ acts on $P\times _{S}R $ in a natural way. Therefore by using Remark \ref{comp}, we get the crossed module: 
	\begin{align*}
	\d' \d \colon P\times _{S}R \to X ,
	\end{align*} 
	which leads to the pullback object in the category of crossed $X$-modules. All fitting into the diagram:
	$$\xymatrix{
		& P\times _{S}R \ar[dl]_-{\pi _{1}} \ar[dr]^-{\pi _{2}} \ar[dd]^{\d} &
		\\ 
		P \ar[dr]_-{\alpha} \ar@/_1.25pc/[ddr]_{\gamma}  & & R \ar[dl]^-{\beta} \ar@/^1.25pc/[ddl]^{\delta}
		\\ &S \ar[d]^{\d'} & 
		\\ & X &}
	$$	
\end{proof}

\begin{proposition}
	The category of crossed ${X}$-modules has an initial object $0 \to X$ and a terminal object $\id \colon X \to X$. Consequently, one can construct the product object as a pullback of the morphisms:
	$$\xymatrix{
		\mathcal{X} \ar[dr]  & & \mathcal{X'} \ar[dl] 
		\\ & 1 &}
	$$
	where $\mathcal{X,X'}$ are two crossed $X$-modules and $1$ is the terminal object. 
\end{proposition}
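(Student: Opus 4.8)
The proof splits into three parts, the third being a formal consequence of the first two; throughout, recall that a morphism in $\mathbf{XMod/X}$ is a crossed module morphism whose base component is $\id_X$. \emph{Initial object.} The plan is to show that $(0,X,0)$, with $0$ the zero object of $\C$ and $0\colon 0\to X$ the trivial morphism, is initial. Since $0$ admits only the trivial $X$-action, the derived-action conditions and XM1)--XM2) hold vacuously, so $(0,X,0)$ is a crossed $X$-module. For any crossed $X$-module $(P,X,\gamma)$, a morphism $(0,X,0)\to(P,X,\gamma)$ in $\mathbf{XMod/X}$ is necessarily $(\iota,\id_X)$ with $\iota\colon 0\to P$ the unique morphism of $\C$; the defining square commutes since $\gamma\iota=0=\id_X\circ 0$, the action-compatibility identities are empty on the domain $0$, and $\iota$ is unique because $0$ is initial in $\C$. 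Hence $(0,X,0)$ is initial.

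\emph{Terminal object.} Next I would equip $X$ with the adjoint derived action $x\cdot x'=x+x'-x$, $x\ast x'=x\ast x'$ --- the usual fact that every object of a category of groups with operations is a crossed module over itself --- and check that $(X,X,\id_X)$ is a crossed $X$-module. The only step that is not automatic is that this is a \emph{derived} action, i.e.\ that conditions 1.--12.\ hold: the group identities give 1.--3., axioms (c) and (e) give 4.--9.\ and 12., and axiom (d) together with the definitions of $\Omega'_{1S}$ and $\Omega''_1$ gives 10.--11. Conditions XM1) and XM2) then read off at once from $\id_X$: $\id_X(x\cdot x')=x+\id_X(x')-x$, $\id_X(x\ast x')=x\ast\id_X(x')$, $\id_X(x)\cdot x'=x+x'-x$, $\id_X(x)\ast x'=x\ast x'$. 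For a crossed $X$-module $(P,X,\gamma)$, commutativity of the defining square forces the top component of any morphism $(P,X,\gamma)\to(X,X,\id_X)$ to equal $\gamma$, and $(\gamma,\id_X)$ \emph{is} a crossed module morphism precisely because XM1) for $(P,X,\gamma)$ says $\gamma(x\cdot p)=x+\gamma(p)-x=x\cdot\gamma(p)$ and $\gamma(x\ast p)=x\ast\gamma(p)$. Since the base component is pinned to $\id_X$, this morphism is unique, so $(X,X,\id_X)$ is terminal.

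\emph{Products.} Finally, in any category with a terminal object $1$ and pullbacks, the pullback of $\X\to 1\leftarrow\mathcal{X}'$ is the binary product $\X\times\mathcal{X}'$. We have just produced $1=(X,X,\id_X)$, and the preceding lemma provides pullbacks in $\mathbf{XMod/X}$; applying it with $S=X$, $\d'=\id_X$, $\alpha=\gamma$, $\beta=\delta$ --- the unique maps $\X=(P,X,\gamma)\to 1$ and $\mathcal{X}'=(R,X,\delta)\to 1$ being $(\gamma,\id_X)$ and $(\delta,\id_X)$ --- yields the pullback crossed $X$-module $(P\times_X R,\,X,\,\d)$ with $\d(p,r)=\gamma(p)=\delta(r)$ and $\pi_1,\pi_2$ the projections, as displayed in the statement. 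A direct check --- a cone in $\mathbf{XMod/X}$ over $\X$ and $\mathcal{X}'$ with a given vertex is exactly a pair of morphisms $(f,\id_X)$, $(g,\id_X)$ out of it with $\gamma f=\delta g$, and any such factors uniquely through $(\pi_1,\pi_2)$ --- shows this is the product, so nothing further is needed. I expect the only genuinely non-formal obstacle to be the verification that the adjoint action on $X$ satisfies all twelve derived-action conditions; every other step is either vacuous or a one-line translation of XM1)--XM2).
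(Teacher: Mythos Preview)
Your proof is correct and follows the natural route; in fact the paper states this proposition without proof, treating it as evident from the preceding pullback lemma and general category-theoretic facts. Your explicit verifications---in particular that the adjoint action on $X$ is derived and that $(\gamma,\id_X)$ is forced and well-defined by XM1)---supply exactly the details the paper omits, and your reduction of products to pullbacks over the terminal object is precisely what the paper's ``Consequently'' is gesturing at.
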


This yields the following:

\begin{proposition}
	Given two crossed modules $\alpha \colon P\rightarrow S$ and $\beta \colon R\rightarrow S$ in a modified category of interest $\C$, their product is the crossed module $\partial :P\times _{S} R\rightarrow S$. 
\end{proposition}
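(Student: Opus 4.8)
The plan is to obtain this as an immediate consequence of the two preceding results, specialised to the case $X = S$ (so that ``their product'' is understood in the category $\mathbf{XMod/S}$ of crossed $S$-modules), so that essentially nothing new has to be computed. First I would recall from the previous proposition that $\mathbf{XMod/S}$ has terminal object $1 = (\id_S \colon S \to S)$, where $S$ acts on itself by conjugation together with the operations $\ast \in \Omega_2'$.

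Next, for each of the given crossed $S$-modules I would identify its unique morphism to $1$. A morphism in $\mathbf{XMod/S}$ fixes the codomain, so any morphism $(P,S,\alpha) \to (S,S,\id_S)$ must have second component $\id_S$; commutativity of the defining square then forces the first component to equal $\alpha$, and the action-compatibility requirements $\alpha(s \cdot p) = s \cdot \alpha(p)$, $\alpha(s \ast p) = s \ast \alpha(p)$ hold automatically, since XM1 for $(P,S,\alpha)$ gives $\alpha(s\cdot p) = s + \alpha(p) - s$ and $\alpha(s\ast p) = s\ast\alpha(p)$, which is precisely the conjugation/operation action on $1$. Thus the canonical morphisms into the terminal object are $(\alpha,\id_S)$ and $(\beta,\id_S)$.

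Now I would apply the preceding lemma on pullbacks of crossed $X$-modules with $X := S$ and $\partial' := \id_S$, feeding it the morphisms $(\alpha,\id_S) \colon (P,S,\alpha) \to (S,S,\id_S)$ and $(\beta,\id_S) \colon (R,S,\beta) \to (S,S,\id_S)$. That lemma, via Lemmas \ref{main1} and \ref{inducedxmod} together with Remark \ref{comp}, produces the crossed module $\id_S \circ \partial = \partial \colon P \times_S R \to S$ with $\partial(p,r) = \alpha(p) = \beta(r)$, and identifies it as the pullback object in $\mathbf{XMod/S}$. Finally, invoking the previous proposition, which exhibits the product in $\mathbf{XMod/S}$ as the pullback of the two structure morphisms to $1$, I conclude that the product of $\alpha \colon P \to S$ and $\beta \colon R \to S$ is $\partial \colon P \times_S R \to S$. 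The only point requiring an actual argument beyond routine bookkeeping is the middle step — verifying that the canonical map of a crossed $S$-module into the terminal object is its own boundary morphism — which is what makes the hypotheses of the pullback lemma available; the rest is just chaining together the two results already established.
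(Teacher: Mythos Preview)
Your argument is correct and follows exactly the route the paper intends: the paper states this proposition with no explicit proof, merely writing ``This yields the following'' after the proposition on terminal objects and products-as-pullbacks, so it is implicitly the same chain of reasoning (terminal object $\id_S\colon S\to S$, pullback lemma with $X=S$ and $\partial'=\id_S$, hence product $=P\times_S R\to S$). Your additional verification that the canonical map to the terminal object is $(\alpha,\id_S)$ via XM1 is a detail the paper suppresses entirely, so your write-up is in fact more complete than the original.
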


Thus, we have proved the following theorem:

\begin{theorem}
	The category $\mathbf{XMod/X}$ is finitely complete.
\end{theorem}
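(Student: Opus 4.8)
\section*{Proof proposal for the final Theorem}

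The plan is to appeal to the standard characterisation of finite completeness: a category is finitely complete if and only if it has a terminal object together with all pullbacks, equivalently, if and only if it has all finite products and all equalizers. Each of these ingredients has essentially been assembled in this subsection, so the work is to record that they fit together. First I would recall that $\mathbf{XMod/X}$ has a terminal object, namely the identity crossed module $\id\colon X\to X$, which is the content of the first Proposition above; this disposes of the empty diagram. This leaves pullbacks (or, alternatively, binary products and equalizers) to be produced.

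Next I would observe that the pullback Lemma preceding this Theorem produces, for an arbitrary cospan $(P,X,\gamma)\xrightarrow{(\alpha,\id)}(S,X,\d')\xleftarrow{(\beta,\id)}(R,X,\delta)$ in $\mathbf{XMod/X}$, the crossed module $\d'\d\colon P\times_S R\to X$ built on the fiber product, equipped with the coordinatewise projections $\pi_1,\pi_2$ as the legs of the limiting cone. Since every cospan in $\mathbf{XMod/X}$ is of this form, $\mathbf{XMod/X}$ has all pullbacks. From the terminal object and pullbacks one then recovers the remaining finite limits by the usual categorical arguments: binary products are pullbacks over the terminal object $\id\colon X\to X$ (this is exactly the ``Consequently'' clause of the first Proposition, made explicit in the second Proposition, where the product of $\alpha\colon P\to S$ and $\beta\colon R\to S$ is exhibited as $\partial\colon P\times_S R\to S$); equalizers are obtained as pullbacks of a morphism against itself; and the limit of any finite diagram is then realised in the standard way as an equalizer of two maps between finite products. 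Hence every finite diagram in $\mathbf{XMod/X}$ admits a limit.

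The one step that genuinely requires care — and the one I would isolate as the main point — is verifying that the fiber product, formed coordinatewise in the underlying category $\C$, really carries a crossed‑$X$‑module structure and enjoys the correct universal property \emph{in $\mathbf{XMod/X}$}, not merely in $\C$ or in $\mathbf{XMod}$. This is precisely what Lemmas \ref{main1} and \ref{inducedxmod}, Remark \ref{comp} and the pullback Lemma have already taken care of: Lemma \ref{inducedxmod} turns $\alpha$ and $\beta$ into crossed modules with $S$-actions defined along $\d'$, Lemma \ref{main1} endows $P\times_S R$ with the coordinatewise $S$-action and the boundary $\d\colon P\times_S R\to S$, Remark \ref{comp} composes this with the crossed module $\d'\colon S\to X$ to give $\d'\d$, and the universal property of the fiber product in $\C$ lifts to the crossed‑module level because the action, the boundary, and the comparison morphism into $P\times_S R$ are all defined componentwise, so that a morphism with target $P\times_S R$ is a crossed module morphism as soon as its two components are. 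Assembling these observations, $\mathbf{XMod/X}$ has a terminal object and all pullbacks, hence all finite limits, and is therefore finitely complete.
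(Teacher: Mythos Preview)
Your proposal is correct and follows essentially the same route as the paper: assemble the terminal object $\id\colon X\to X$ and pullbacks (the fiber-product construction supplied by the preceding lemmas), then invoke the standard fact that a terminal object together with pullbacks yields all finite limits; the paper's own proof is nothing more than the sentence ``Thus, we have proved the following theorem'' after recording exactly these ingredients. One small slip worth correcting: the equalizer of $f,g\colon A\to B$ is not ``the pullback of a morphism against itself'' (that produces the kernel pair $A\times_B A$), but rather the pullback of $(f,g)\colon A\to B\times B$ against the diagonal $\Delta_B\colon B\to B\times B$; this does not affect the validity of your overall argument.
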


\begin{remark}
	As a consequence of this section, one can obtain the completeness of the categories of crossed X-modules of groups, (commutative) algebras, Lie and Leibniz algebras, dialgebras, etc.
\end{remark}

\section{\bf Pullback Crossed Modules}

\begin{definition}\label{def1}
	For a given crossed module $(P,R,\partial)$ and a morphism
	$\phi \colon S \to R$ in $\mathbb{C}$,  the pullback crossed module is defined as a crossed module morphism:
	\begin{align*}
		(\phi',\phi) \colon \phi^{\star}(P,R,\d) \to (P,R,\d) \, ,
	\end{align*}
	 where the crossed module: 
	\begin{align*}
	\phi ^{\star }(P,R,\partial
	)=\left( \phi ^{\star }(P),S,\partial ^{\star }\right) 
	\end{align*}
    satisfies the following universal property.
	
	\medskip
		
		For any crossed module morphism: $$ \left( f,\phi \right)  \colon \left( X,S,\mu \right) \rightarrow (P,R,\partial ) \, ,$$
		there exists a unique crossed module morphism: $$\left( f^{\star
		},\id_{S}\right) :\left( X,S,\mu \right) \rightarrow \left( \phi ^{\star
	}(P),S,\partial ^{\star }\right) $$ such that the following diagram commutes:
	\begin{align}\label{cat1}
	\xymatrix@R=40pt@C=40pt{
		& & & (X,S,\mu ) \ar[d]^{(f,\phi)} \ar@{-->}[dlll]_{(f^{\star },\id_{S})}  \\
		\ \left( \phi ^{\star }(P),S,\partial^{\star }\right) \ar[rrr]_{(\phi',\phi)} &  & & (P,R,\partial)  } 
	\end{align}

 In other words, it can be seen as a pullback \cite{Adamek} diagram:
\begin{align}\label{cat2}
\xymatrix@R=20pt@C=20pt{
	X \ar[dd]_{\mu} \ar[rr]^{f} \ar@{.>}[dr]|-{f^{\star}}
	&  & P \ar[dd]^{\partial}  \\
	&\phi^{\star}(P)\ar[ur]_{\phi ^{\prime }}\ar[dl]^{\partial^{\star}}&
	\\ S  \ar[rr]_{\phi}
	& & R             }
\end{align}

\end{definition}

In order to give a particular construction for the pullback crossed module, let $(P, R, \d)$ be a crossed module and $\phi
:S\rightarrow R$ be a morphism in $\mathbb{C}$. Define:
\begin{equation*}
\phi ^{\star }(P)=P\times _{R}S=\left\{ \left( p,s\right) \mid \partial
\left( p\right) =\phi \left( s\right) \right\} ,
\end{equation*}%
and define the morphism $\partial ^{\star
}:\phi ^{\star }(P)\rightarrow S$ by: 
\begin{align*}
\partial ^{\star }\left( p,s\right) =s.
\end{align*} 

\medskip

There exists an action of $S$ on $\phi ^{\star }(P)$ defined by:
\begin{equation*}
\begin{array}{ccl}
S \times \phi ^{\star }(P)  & \rightarrow  & \phi ^{\star }(P) \\
\left(t, \left( p,s\right)\right)  & \mapsto  &t \cdot \left( p,s\right) =\left( \phi \left( t \right) \cdot p
, t + s - t \right) ,
\end{array}
\end{equation*}
and:
\begin{equation*}
\begin{array}{ccl}
S \times \phi ^{\star }(P)  & \rightarrow  & \phi ^{\star }(P) \\
\left(t, \left( p,s\right)\right)  & \mapsto  &t \ast \left( p,s\right) =\left( \phi \left( t \right) \ast p
, t \ast s \right) .%
\end{array}%
\end{equation*}

\medskip

Then $(\phi^{\star }(P) , S , \partial ^{\star })$ defines a crossed module since:
\medskip
\begin{itemize}
	
	\item[XM1)]
	\begin{align*}
	\partial ^{\star }\left(t \cdot \left( p,s\right) \right) & = \partial ^{\star }\left( \phi \left( t \right)\cdot p
	, t + s - t \right)  =  t + s - t  = t + \partial ^{\star }\left( p,s\right) - t \, , \\
	\partial ^{\star }\left(t \star \left( p,s\right) \right) & = \partial ^{\ast }\left( \phi \left( t \right)\ast p
	, t \ast s \right)  = t \ast s  = t \ast \partial ^{\star }\left( p,s\right) \, , 
    \end{align*}%

	\item[XM2)] 
	\begin{align*}
	\partial ^{\star }\left( p^{\prime },s^{\prime
	}\right) \cdot \left( p,s\right)  & =  s' \cdot \left( p,s\right)    \\
	& =  \left( \phi \left( s^{\prime }\right) \cdot p ,s' + s - s' \right)  \\
	& =  \left( \partial \left( p^{\prime }\right)\cdot p ,s' + s - s' \right)  \\
	& =  \left( p' + p - p', s' + s - s' \right)  \\
	& = \left( p',s'\right) + \left( p, s\right) - (p',s')  \, ,
	\end{align*}%
	\begin{align*}
	\partial ^{\star }\left( p^{\prime },s^{\prime
	}\right) \ast \left( p,s\right)  & =  s' \ast \left( p,s\right)    \\
	& =  \left( \phi \left( s^{\prime }\right) \ast p ,s'\ast s \right)   \\
	& =  \left( \partial \left( p^{\prime }\right)\ast p ,s' \ast s\right)   \\
	& =  \left( p'\ast p, s'\ast s \right) \\
	& =  \left( p',s'\right) \ast \left( p, s\right)  ,
	\end{align*}	
\end{itemize}
for all $(p,s) \, (p',s') \in \phi^{\star} (P)$ and $t \in S$.

\bigskip

This construction satisfies the universal property. Consider the crossed module morphism:
\begin{equation*}
\left( \phi ^{\prime },\phi \right) :\left( \phi ^{\star }(P),S,\partial
^{\star }\right) \rightarrow (P,R,\partial ) \, ,
\end{equation*}%
where $\phi ^{\prime }:\phi ^{\star }(P)\rightarrow P$ is defined by $\phi
^{\prime }\left( p,s\right) =p.$

\bigskip

Suppose that $\left( X,S,\mu \right) $ is a crossed module and the tuple:
\begin{align}\label{com2}
\left( f,\phi \right) :\left( X,S,\mu \right) \rightarrow (P,R,\partial )
\end{align}
is a crossed module morphism. 

\medskip

Define: $f^{\star }:X\rightarrow \phi ^{\star }(P)$ by $f^{\star }(x)=\left(
f\left( x\right) ,\mu \left( x\right) \right).$ Then:
\begin{equation*}
\left( f^{\star },\id_{S}\right) :\left( X,S,\mu \right) \rightarrow \left(
\phi ^{\star }(P),S,\partial ^{\star }\right)
\end{equation*}
becomes a crossed module morphism. In fact the diagram:
$$ \xymatrix @R=20pt@C=20pt{
	X\ar[r]^{\mu} \ar[d]_{f^{\star }} &  S  \ar[d]^{\id_{S}} \\
	\ \phi ^{\star }(P) \ar[r]_-{\partial^{\star }} & S  }  $$
is commutative since:
\begin{align}\label{com1}
\begin{split}
\partial ^{\star }f^{\star }(x) = \partial ^{\star }\left( f\left( x\right)
,\mu \left( x\right) \right)  = \mu \left( x\right)  = \id_{S} \mu \left( x\right) ,
\end{split} 
\end{align}
and also:
\begin{align*}
f^{\star }\left( s\cdot x\right) & = \left( f\left( s\cdot x \right) ,\mu
\left( s\cdot x\right) \right)   \\
& = \left(\phi \left( s\right) \cdot f\left( x\right) ,s \cdot \mu \left(
x\right) \right)  \\
& = s \cdot \left( f\left( x\right) ,\mu \left( x\right) \right)   \\
& =  \id_{S}\left( s\right) \cdot f^{\star }(x) ,
\end{align*}
\begin{align*}
f^{\star }\left( s\ast x\right) & = \left( f\left( s\ast x \right) ,\mu
\left( s\ast x\right) \right) \\
& =  \left(\phi \left( s\right) \ast f\left( x\right) ,s \ast \mu \left(
x\right) \right)   \\
& = s \ast \left( f\left( x\right) ,\mu \left( x\right) \right) \\
& = \id_{S}\left( s\right) \ast f^{\star }(x) ,
\end{align*}
for all $s\in S$ and $x\in X$. Moreover we have:
\begin{align}\label{com3}
\begin{split}
\phi ^{\prime }f^{\star }(x) & = \phi ^{\prime }\left( f\left( x\right)
,\mu \left( x\right) \right)   =  f\left( x\right) 
\end{split}
\end{align}
that makes diagram \eqref{cat1} commutative. In other words, pullback diagram \eqref{cat2} commutes
since:
\begin{itemize}
	\item  $\partial ^{\star }f^{\star }(x) =  \mu \left( x\right)$  from \eqref{com1}, 
	\item  $\phi \mu = \partial f$ since \eqref{com2} is a crossed module morphism,
	\item $\phi' f^{\star} = f$ from \eqref{com3}.
\end{itemize}

\medskip

Finally, we need to prove that $(f^{\star},\id)$ is unique in \eqref{cat1}. Suppose that:
\begin{align*}
\left( f^{\star\star
},\id_{S}\right) :\left( X,S,\mu \right) \rightarrow \left( \phi ^{\star
}(P),S,\partial ^{\star }\right) 
\end{align*}
is a crossed module morphism with the same property as $(f^{\star},\id)$. We get:
\begin{align*}
\partial ^{\star }f^{\star\star }(x) =f(x) , \quad \quad
\partial ^{\star }f^{\star\star }(x)=\mu(x) ,
\end{align*}
for all $x \in X$ which implies:
\begin{align*}
f^{\star\star}(x)=(p,s)=(f(x),\mu(x))=f^{\star}(x),
\end{align*}
and proves that $(f^{\star},\id)$ is unique.

\bigskip

Therefore we have the following:
\begin{corollary}
	We get a functor in $\C$:
	\begin{align*}
	\phi^{\star} \colon \mathbf{XMod/{R} \to XMod/{S} } .
	\end{align*}
Moreover, let $(P,R,\partial )$ be a crossed module  and $\phi \colon S \to R
	$ be a morphism in $ \C $. We have the pullback diagram:
	
	$$ \xymatrix @R=40pt@C=40pt{
		\phi ^{\star }(P) \ar[r]^{\phi ^{\prime }} \ar[d]_{\partial^{\star }} &  P  \ar[d]^{\partial} \\
		\ S \ar[r]_\phi & R  } $$
	
\end{corollary}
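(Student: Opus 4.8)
The plan is to repackage the explicit construction carried out above into the two assertions of the corollary: that $\phi^{\star}$ is a functor $\mathbf{XMod/R}\to\mathbf{XMod/S}$, and that the displayed square is a pullback in $\C$. The second point is essentially definitional: by construction $\phi^{\star}(P)=P\times_R S$ is the fiber product of $\partial\colon P\to R$ and $\phi\colon S\to R$, which — as recalled at the beginning of Section~\ref{main1}'s section — is exactly the pullback of these two morphisms in $\C$; the maps $\phi'(p,s)=p$ and $\partial^{\star}(p,s)=s$ are precisely the two canonical projections, and $\partial\phi'(p,s)=\partial(p)=\phi(s)=\phi\partial^{\star}(p,s)$, so the square commutes and enjoys the required universal property in $\C$. (At the level of crossed modules this universal property is the one already verified via diagram~\eqref{cat2}.)

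For functoriality, the assignment on objects $(P,R,\partial)\mapsto(\phi^{\star}(P),S,\partial^{\star})$ has already been defined and checked to yield a crossed $S$-module, so only the action on morphisms remains. Given a morphism of crossed $R$-modules $(g,\id_R)\colon(P,R,\partial)\to(P',R,\partial')$ — so that $\partial' g=\partial$ and $g$ preserves the $R$-action — I would set $\phi^{\star}(g)\colon\phi^{\star}(P)\to\phi^{\star}(P')$ by $(p,s)\mapsto(g(p),s)$. This is well defined since $\partial'(g(p))=\partial(p)=\phi(s)$ whenever $(p,s)\in\phi^{\star}(P)$, and it visibly commutes with the structural morphisms, as $(\partial')^{\star}\phi^{\star}(g)(p,s)=s=\partial^{\star}(p,s)$.

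It then remains to see that $(\phi^{\star}(g),\id_S)$ is a morphism in $\mathbf{XMod}$ and that $\phi^{\star}$ respects identities and composition. Preservation of $+$, of every $\ast\in\Omega_2'$ and of every $w\in\Omega_1'$ is componentwise and follows from the corresponding property of $g$; compatibility with the $S$-actions is the only point needing a line of care, and it reduces at once to the hypothesis that $g$ preserves the $R$-action, since using $g(\phi(t)\cdot p)=\phi(t)\cdot g(p)$ and $g(\phi(t)\ast p)=\phi(t)\ast g(p)$ one gets
\[
\phi^{\star}(g)\big(t\cdot(p,s)\big)=\big(g(\phi(t)\cdot p),\,t+s-t\big)=t\cdot\big(g(p),s\big)=t\cdot\phi^{\star}(g)(p,s),
\]
and likewise for the star action. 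Finally $\phi^{\star}(\id_P)(p,s)=(p,s)$ and $\phi^{\star}(g'g)(p,s)=(g'g(p),s)=\phi^{\star}(g')\phi^{\star}(g)(p,s)$, so $\phi^{\star}$ is a functor $\mathbf{XMod/R}\to\mathbf{XMod/S}$. No serious obstacle arises, since all substantive content (that $\phi^{\star}(P)$ is a crossed module with the stated universal property) is already in hand; the remaining work is the bookkeeping of checking that the arrow assignment is well defined, lands in $\mathbf{XMod/S}$, and is functorial, with the action-compatibility step above being the mild crux.
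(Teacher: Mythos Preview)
Your proposal is correct. In fact, the paper does not give a proof of this corollary at all: it simply writes ``Therefore we have the following:'' after the explicit construction and universal-property verification, leaving both the pullback assertion and the functoriality as immediate consequences. Your argument supplies the details the paper omits --- in particular the definition of $\phi^{\star}$ on morphisms and the check that $(\phi^{\star}(g),\id_S)$ is a crossed module morphism --- and these are exactly the expected verifications, with the action-compatibility step handled correctly via $g(\phi(t)\cdot p)=\phi(t)\cdot g(p)$. So your approach is the same in spirit as the paper's (extracting the corollary from the preceding construction), only more explicit.
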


\begin{example}
	Given an object $R$ and a normal subobject $N$ of $R$, then $(N,R,\d)$
	is a crossed module where $\d$ is the inclusion map. Suppose that $\phi :S\rightarrow R$ is a morphism. Then the pullback crossed module is defined by:
	\begin{align*}
	\phi ^{\star }\left( N\right) & = \left\{ \left( n,s\right) \mid \partial
	\left( n\right) =\phi \left( s\right) \text{, }n\in N\text{, }s\in S\right\}
	\\
	& \cong  \left\{ s\in S\mid \phi \left( s\right) =n,\text{ }n\in N\right\}
	\\
	& = \phi ^{-1}\left( N\right) ,
	\end{align*}%
	and the pullback diagram is:
	$$ \xymatrix @R=40pt@C=40pt{
		\phi ^{-1}\left( N\right) \ar[r]^-{\phi ^{\prime }} \ar[d]_{\partial^{\star }} &  N  \ar[d]^{\partial} & \\
		\ S \ar[r]_\phi & R &  } $$
	where the preimage $\phi ^{-1}\left( N\right) $ is a normal subobject of $S$. 
	
	\medskip
	
	In particular, if $N=\left\{ 0 \right\} $, then:
	\begin{equation*}
	\phi ^{\star }\left( \left\{ 0\right\} \right) \cong \left\{ s\in S\mid \phi
	\left( s\right) =0\right\} =\ker \phi .
	\end{equation*}%
	So kernels are particular cases of pullback crossed modules.
\end{example}

\section{\bf Pullback Cat$^1$-Objects}

\begin{definition}
	The definition of pullback \cat object along a morphism is similar to that for crossed modules given in Definition \ref{def1}. For a given \cat object $(R,S)$ and a morphism $\phi \colon Q \to S$ in $\mathbb{C}$, we require a \cat object $\phi ^{\star }(R,S)=\left( \phi ^{\star}(R),Q\right)$ to fill the pullback diagrams:

		\begin{align}\label{cat3}
		 \xymatrix@R=40pt@C=40pt{
			& & & (P,Q ) \ar[d]^{(\varphi,\phi)} \ar@{-->}[dlll]_{(\psi,\id_{S})}  \\
			\ \left( \phi ^{\star}(R),Q\right) \ar[rrr]_{(\pi,\phi)} &  & & (R,S)  }
		\end{align}
and
	\begin{align}\label{cat4}
	\xymatrix{
		P \ddto\ddto<.5ex>^{s^{\prime }}_{t^{\prime }} \ar[rr]^{\varphi} \ar@{.>}[dr]|-{\psi}
		&  & R \ddto\ddto<.5ex>^{s}_{t}  \\
		&\phi^{\star}(R)\ar[ur]_{\pi}\dlto\dlto<.5ex>^{s^{\star}}_{t^{\star}}   &
		\\ Q  \ar[rr]_{\phi}
		& &S            }
	\end{align}

	Note that we do not include the embedding morphisms in the above diagrams for the sake of simplicity.
\end{definition}

In order to give a particular construction for the pullback \cat object, let $\left( e;s,t:R\rightarrow S\right) $ be a \cat object and $\phi
:Q\rightarrow S$ be a morphism. Define:
\begin{equation*}
\phi ^{\star }\left( e;s,t:R\rightarrow S\right)  = \left( e^{\star };s^{\star },t^{\star }:\phi ^{\star }(R)\rightarrow
Q\right) ,
\end{equation*}
where:
\begin{equation*}
\phi ^{\star }(R)=\left\{ \left( q_{1},r,q_{2}\right) \in Q\times R\times
Q\mid \phi \left( q_{1}\right) =s\left( r\right) ,\text{ }\phi \left(
q_{2}\right) =t\left( r\right) \right\}
\end{equation*}
is a subobject of $Q \times R \times Q$. 

\medskip

Define the morphisms:
\begin{align*}
s^{\star }\left( q_{1},r,q_{2}\right) =q_{1}, \quad t^{\star }\left(q_{1},r,q_{2}\right) =q_{2}, \quad e^{\star }\left( q\right)  =\left( q,e\phi
\left( q\right) ,q\right).
\end{align*}
It is easily verified that $s^{\star}e^{\star }=t^{\star }e^{\star}= \id_{Q}$.

\medskip

Moreover, let $\left( q_{1}^{\prime },r_{1},q_{1}\right) \in \ker s^{\star }$ and
$\left( q_{2},r_{2},q_{2}^{\prime }\right) \in \ker t^{\star }.$ Then:

\begin{equation*}
s^{\star }\left( q_{1}^{\prime },r_{1},q_{1}\right) = 0_Q \quad \text{and} \quad t^{\star }\left( q_{2},r_{2},q_{2}^{\prime }\right) =0_Q,
\end{equation*}
which implies $q_{1}^{\prime }=q'_2 = 0_Q$, hence we get $r_1 \in \ker s$ and $r_2 \in \ker t$.

\medskip

Therefore:
\begin{align*}
\left( q_{1}^{\prime },r_{1},q_{1}\right) \ast \left(
q_{2},r_{2},q_{2}^{\prime }\right) & =\left( 0_Q \ast
q_{2},r_{1}\ast r_{2},q_{1}\ast 0_Q \right) \\
& =\left( 0_Q , r_{1} \ast r_2 , 0_Q \right) \\
& = ( 0_Q , 0_R , 0_Q) ,
\end{align*}
and:
\begin{align*}
\left( q_{1}^{\prime },r_{1},q_{1}\right) + \left(
q_{2},r_{2},q_{2}^{\prime }\right) & =\left( 0_Q +
q_{2},r_{1} + r_{2},q_{1} + 0_Q \right) \\
& =\left( q_2 , r_{2} + r_1 , q_1 \right) \\
& = \left(
q_{2},r_{2},q_{2}^{\prime }\right) + \left( q_{1}^{\prime },r_{1},q_{1}\right) ,
\end{align*}
which implies:
\begin{align*}
\left( q_{1}^{\prime },r_{1},q_{1}\right) + \left(
q_{2},r_{2},q_{2}^{\prime }\right) - \left( q_{1}^{\prime },r_{1},q_{1}\right) - \left(
q_{2},r_{2},q_{2}^{\prime }\right) = 0_{\phi^{\ast}(R)}.
\end{align*}
Consequently, we get the \cat object structure: 
$$\left( e^{\star};s^{\star},t^{\star}:\phi ^{\star}(R)\rightarrow Q\right). $$ 

Define the morphism:
\begin{equation*}
\begin{array}{cccl}
\pi : & \phi ^{\star}(R) & \rightarrow & R \\
& \left( q_{1},r,q_{2}\right) & \mapsto & \pi \left( q_{1},r,q_{2}\right) =r.%
\end{array}
\end{equation*}
Since:
\begin{align*}
\phi s^{\star}\left( \left( q_{1},r,q_{2}\right) \right) & =\phi \left(
q_{1}\right)  =s\left( r\right)  =s\pi \left( q_{1},r,q_{2}\right) , \\
\phi t^{\star}\left( \left( q_{1},r,q_{2}\right) \right) & =\phi \left(
q_{2}\right) =t\left( r\right) =t\pi \left( q_{1},r,q_{2}\right) , \\
\pi e^{\star }\left( q\right) & =\pi \left( q,e\phi \left( q\right) ,q\right) =e\phi \left( q\right),
\end{align*}
for all $\left( q_{1},r,q_{2}\right) \in \phi ^{\star }(R)$, $q\in Q$, the following diagram commutes:
$$
\diagram
\phi ^{\star }(R)\ddto \ddto<.7ex>^{s^{\star}}_{t^{\star}} %
\rrto^{\pi} &&  R\ddto<-.8ex>_{t} \ddto<-.1ex>^{s}\\
\\
Q \rrto_{\phi} \ar@/^1.25pc/[uu]^{e^{\star}} && S \ar@/_1.25pc/[uu]_{e}
\enddiagram
$$
Hence $(\pi,\phi)$ becomes a \cat object morphism.

\medskip

Now we need to prove the universal property. Let:
\begin{align*}
\left( \varphi ,\phi \right) \colon  \left( e^{\prime };s^{\prime
},t^{\prime }:P\rightarrow Q\right) \rightarrow \left( e;s,t:R\rightarrow
S\right) 
\end{align*}
be any \cat  morphism such that the following diagram commutes:
$$
\diagram
P\ddto\ddto<.5ex>^{s^{\prime }}_{t^{\prime }} %
\rrto^{\varphi} &&  R\ddto \ddto<.5ex>^{s}_{t}\\
\\
Q \rrto_{\phi} \uutol^{{e^{\prime }}} && S \uutor_{e}
\enddiagram
$$

Define $\psi :P\rightarrow \phi
^{\star}(R)$ by $
\psi \left( p\right) =\left( s^{\prime }\left( p\right)
,\varphi \left( p\right) ,t^{\prime }\left( p\right) \right)$. Then:
\begin{equation*}
\left( \psi
,\id_{Q}\right) \colon \left( e^{\prime };s^{\prime },t^{\prime
}:P\rightarrow Q\right) \rightarrow \left( e^{\star };s^{\star
},t^{\star}:\phi ^{\star }(R)\rightarrow Q\right) 
\end{equation*}%
becomes a \cat object since:
\begin{align*}
s^{\star }\psi \left( p\right) & =s^{\star }\left( s^{\prime }\left( p\right)
,\varphi \left( p\right) ,t^{\prime }\left( p\right) \right) =s^{\prime
}\left( p\right)  =\id_{Q}s^{\prime }\left( p\right) , \\
t^{\star }\psi \left( p\right) & =t^{\star}\left( s^{\prime }\left( p\right)
,\varphi \left( p\right) ,t^{\prime }\left( p\right) \right)  =t^{\prime
}\left( p\right) =\id_{Q}t^{\prime }\left( p\right) ,
\end{align*}%
and also:
\begin{align*}
\psi e^{\prime }\left( q\right) & =\left( s^{\prime }e^{\prime }\left(
q\right) ,\varphi e^{\prime }\left( q\right) ,t^{\prime }e^{\prime }\left(
q\right) \right)  =\left( q,e\phi \left( q\right) ,q\right)  =e^{\star } \id_{Q}\left( q\right) ,
\end{align*}
for all $p \in P$ and $q \in Q$. Moreover we have:
\begin{align*}
\pi \psi \left( p\right) & =\pi \left( s^{\prime }\left( p\right) ,\varphi
\left( p\right) ,t^{\prime }\left( p\right) \right)   =\varphi \left( p\right),
\end{align*}
which makes \eqref{cat3} commutative and leads to the pullback diagram \eqref{cat4}. The uniqueness of $\left( \pi ,\phi \right) $ can be proven analogously to the crossed module case given in the previous section.

\section{\bf Conclusion}

Consequently, we get the commutativity of the following diagram (up to isomorphism) for a fixed morphism $\phi$ of $\C$.
$$ \xymatrix @R=40pt@C=40pt{
	\mathbf{XMod} \ar[r]^{\phi^{\star}} \ar[d]_{\mathbf{C^{1}}} &  \mathbf{XMod}  \ar[d]^{\mathbf{C^{1}}} \\
	\mathbf{Cat^{1}} \ar[r]_{\phi^{\star}} & \mathbf{Cat^{1}}  } $$

\medskip

Another main outcome of the paper is the following:

\medskip

One can obtain pullback crossed modules and pullback \cat objects in many well-known algebraic categories listed in Example \ref{example1}, such as category of groups, (commutative) algebras, dialgebras, Lie algebras and Leibniz algebras, etc. For instance, if we consider the cases of category of groups and commutative algebras, we lead to the constructions given in \cite{Alp1, Alp2, Brown}.

\section*{\bf Acknowledgments}
The authors are thankful to Enver \"Onder Uslu, Murat Alp and the anonymous referee for their invaluable comments and suggestions.

\end{document}